\numberwithin{equation}{section}
\newtheorem{theorem}{Theorem}[section]
\newtheorem{definition}{Definition}[section]
\newtheorem{lemma}[theorem]{Lemma}
\newtheorem{proposition}[theorem]{Proposition}
\newtheorem{corollary}[theorem]{Corollary}
\newtheorem{note}[theorem]{Note}
\numberwithin{equation}{section}
      \def\@setcopyright{}
      \def\serieslogo@{}
\begin{document}
\bibliographystyle{plain}

\title{Chromatic Completion Number}
\author{$^*$E.G. Mphako-Banda and $^{\dag}$J. Kok }
\address{ $^*$School of Mathematical Sciences, University of Witswatersrand, Johannesburg, South Africa.\\ 
 $^{\dag}$Centre for Studies in Discrete Mathematics,Vidya Academy of Science \& Technology,Thrissur, India}
\email{$^*$eunice.mphako-banda@wits.ac.za and $^{\dag}$ kokkiek2@tshwane.gov.za}
\date{}
\keywords{chromatic completion number, chromatic completion graph, chromatic completion edge, bad edge, sum-term partition, $\ell$-completion sum-product}
\subjclass[2010]{05C15, 05C38, 05C75, 05C85}  

\begin{abstract}
We use a well known concept of proper vertex colouring of a graph to introduce the construction of a chromatic completion graph and its related parameter, the chromatic completion number of a graph. We then give the  chromatic completion number of certain classes of cycle derivative graphs and helm graphs. Finally, we discuss further problems for research related to this concept.  
\end{abstract}

\maketitle

\section{Introduction}
For general notation and concepts in graphs see \cite{bondy,harary,banda}.  Unless stated otherwise, all graphs will be finite and simple, connected graphs with at least one edge. The set of vertices and the set of edges of a graph $G$ are denoted by, $V(G)$ and  $E(G)$ respectively. The number of vertices is called the order of $G$ say, $n$ and the number of edges of $G$ is denoted by, $\varepsilon(G).$ If $G$ has order $n \geq 1$ and has no edges ($\varepsilon(G)=0$) then $G$ is called a null graph. The degree of a vertex $v \in V(G)$ is denoted $d_G(v)$ or when the context is clear, simply as $d(v)$. The minimum and maximum degree $\delta(G)$ and $\Delta(G)$ respectively, have the conventional meaning. When the context is clear we shall abbreviate to $\delta$ and $\Delta,$ respectively.

For a set of distinct colours $\mathcal{C}= \{c_1,c_2,c_3,\dots,c_\ell\}$ a vertex colouring of a graph $G$ is an assignment $\varphi:V(G) \mapsto \mathcal{C}.$ A vertex colouring is said to be a \textit{proper vertex colouring} of a graph $G$ if no two distinct adjacent vertices have the same colour. The cardinality of a minimum set of distinct colours in a proper vertex colouring of $G$ is called the \textit{chromatic number} of $G$ and is denoted $\chi(G).$ We call such a colouring a $\chi$-colouring or a \textit{chromatic colouring} of $G.$ A chromatic colouring of $G$ is denoted by $\varphi_\chi(G)$. Generally a graph $G$ of order $n$ is $k$-colourable for $\chi(G) \leq k.$ Unless mentioned otherwise, a set of colours will mean a set of distinct colours.

Generally the set, $c(V(G)) \subseteq \mathcal{C}.$ A set $\{c_i \in \mathcal{C}: c(v)=c_i$ for at least one $v\in V(G)\}$ is called a colour class of the colouring of $G.$ If $\mathcal{C}$ is the chromatic set it can be agreed that $c(G)$ means set $c(V(G))$ hence, $c(G) \Rightarrow \mathcal{C}$ and $|c(G)| = |\mathcal{C}|.$ For the set of vertices $X\subseteq V(G),$ the subgraph induced by $X$ is denoted by, $\langle X\rangle.$ The colouring of $\langle X\rangle$ permitted by $\varphi:V(G) \mapsto \mathcal{C}$ is denoted by, $c(\langle X\rangle).$ The number of times a colour $c_i$ is allocated to vertices of a graph $G$ is denoted by $\theta_G(c_i)$ or if the context is clear simply, $\theta(c_i).$

Index labeling the elements of a graph such as the vertices say,\\ $v_1,v_2,v_3,\dots,v_n$ or written as, $v_i,$ where $i = 1,2,3,\dots,n,$ is called minimum parameter indexing. Similarly, a \textit{minimum parameter colouring} of a graph $G$ is a proper colouring of $G$ which consists of the colours $c_i;\ 1\le i\le \ell.$

In this paper, Section~\ref{s2} introduces a new parameter called, the \emph{chromatic completion number} of a graph $G.$ Subsection~\ref{sub2.1} presents results on chromatic completion number for a few known classes of cycle derivative graphs. Subsection~\ref{sub2.2} presents results on chromatic completion number on helm graphs. Finally, in section~\ref{s3}, a few suggestions on future research on this problem are discussed.
\section{Chromatic completion number of cycle derivative graphs}
\label{s2}
In an improper colouring an edge $uv$ for which, $c(u)=c(v)$ is called a \emph{bad edge}. See \cite{banda} for an introduction to $k$-defect colouring and corresponding polynomials. For a colour set $\mathcal{C},$ $|\mathcal{C}| \geq \chi (G)$ a graph $G$ can always be coloured properly hence, such that no bad edge results. Also, for a set of colours $\mathcal{C},$ $|\mathcal{C}| = \chi (G) \geq 2$ a graph $G$ of order $n$ with corresponding chromatic polynomial $\mathcal{P}_G(\lambda),$ can always be coloured properly in $\mathcal{P}_G(\lambda)$ distinct ways.  The notion of the \emph{chromatic completion number} of a graph $G$ denoted by, $\zeta(G)$ is the maximum number of edges over all chromatic colourings that can be added to $G$ without adding a bad edge. The resultant graph $G_\zeta$ is called a \emph{chromatic completion graph} of $G.$ The additional edges are called \emph{chromatic completion edges}. It is trivially true that $G\subseteq G_\zeta.$ Clearly for a complete graph $K_n,$ $\zeta(K_n)=0.$ In fact for any complete $\ell$-partite graph $H=K_{n_1,n_2,n_3,\dots,n_\ell},$ $\zeta(H)=0.$ Hereafter, all graphs will not be $\ell$-partite complete. For graphs $G$ and $H$  of order $n$ with $\varepsilon(G)\geq \varepsilon(H)$ no relation between $\zeta(G)$ and $\zeta(H)$ could be found. The first result is straight forward.
\begin{theorem}
\label{thm2.1}
A graph $G$ of order $n$ is not complete, if and only if $G_\zeta$ is not complete.
\end{theorem}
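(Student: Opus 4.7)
The plan is to prove both directions separately, with the reverse direction essentially immediate and the forward direction resting on a pigeonhole observation about chromatic colourings.

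For the reverse direction ($G_\zeta$ not complete $\Rightarrow$ $G$ not complete), I would argue the contrapositive. If $G = K_n$, then every pair of distinct vertices is already adjacent in $G$, so there are no further edges one could add. Since $G \subseteq G_\zeta$ by definition, this forces $G_\zeta = K_n$. Thus completeness of $G$ transfers immediately to $G_\zeta$.

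For the forward direction ($G$ not complete $\Rightarrow$ $G_\zeta$ not complete), I would use the standard fact that $\chi(G) = n$ if and only if $G = K_n$. Hence if $G$ is not complete, then $\chi(G) \leq n-1$, and by the pigeonhole principle every chromatic colouring $\varphi_\chi(G)$ assigns the same colour to at least two distinct vertices $u,v$. For any such pair, the edge $uv$ would be a bad edge with respect to $\varphi_\chi(G)$, so by the definition of chromatic completion edges, $uv \notin E(G_\zeta)$ for the $G_\zeta$ obtained from that colouring. Since this holds for every chromatic colouring, no chromatic completion graph of $G$ can contain all $\binom{n}{2}$ edges, and hence $G_\zeta$ is not complete.

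There is no real obstacle: the argument is essentially a direct unpacking of the definitions together with the characterisation $\chi(K_n) = n$. The only subtlety worth flagging explicitly in the write-up is that $G_\zeta$ is not a priori unique, so the forward direction should be phrased as "for every chromatic colouring, the induced chromatic completion graph omits at least one edge," which suffices since $\zeta(G)$ is defined as a maximum over all chromatic colourings.
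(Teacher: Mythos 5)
Your proposal is correct and follows essentially the same route as the paper: the reverse direction via $G\subseteq G_\zeta$ (so completeness of $G$ forces completeness of $G_\zeta$), and the forward direction via $\chi(G)<n$ together with pigeonhole to produce a monochromatic non-adjacent pair $u,v$ whose edge can never be a completion edge. Your explicit remark that the argument must quantify over all chromatic colourings (since $G_\zeta$ is not unique) is a small point of added care, but the underlying argument is identical to the paper's.
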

\begin{proof}
Let $G$ be of order $n,$ then $G_\zeta$ is of order $n.$ If $G_\zeta \ncong K_n$ then  $G\ncong K_n,$ since $G\subseteq G_\zeta.$

Conversely, if $G$ is not complete then $\chi(G)< n$ hence, for any chromatic colouring of $G,$ at least one pair of distinct vertices say $u$ and $v$ exists such that $c(u)=c(v).$ Therefore, edge $uv\notin E(G_\zeta)$  implying $G_\zeta$ is not complete.
\end{proof}

Theorem~\ref{thm2.1} can be stated differently i.e. $G$ is complete if and only if $G_\zeta$ is complete.
The next lemma does not necessarily correspond to a chromatic completion graph. It represents a \emph{pseudo completion graph} corresponding to a chromatic colouring, $\varphi:V(G)\mapsto \mathcal{C}.$
\begin{lemma}
\label{lem2.2}
For a chromatic colouring $\varphi:V(G)\mapsto \mathcal{C}$ a pseudo completion graph, $H(\varphi)= K_{n_1,n_2,n_3,\dots,n_\chi}$ exists such that, $$\varepsilon(H(\varphi))-\varepsilon(G) =\sum\limits_{i=1}^{\chi-1}\theta_G(c_i)\theta_G(c_j)_{(j=i+1,i+2,i+3,\dots,\chi)}-\varepsilon(G) \leq \zeta(G).$$
\end{lemma}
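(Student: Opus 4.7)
The plan is to turn the chromatic colouring $\varphi$ into an explicit candidate completion and then read off the edge count by comparison with the associated complete multipartite graph.

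First I would fix a chromatic colouring $\varphi:V(G)\mapsto \mathcal{C}$ and write $V_i=\{v\in V(G):\varphi(v)=c_i\}$ for $i=1,2,\dots,\chi$, so that $|V_i|=\theta_G(c_i)$ and $\{V_1,V_2,\dots,V_\chi\}$ is a partition of $V(G)$. Because $\varphi$ is a proper colouring, every edge of $G$ has its two endpoints in distinct classes $V_i$ and $V_j$. Consequently $G$ is a spanning subgraph of the complete multipartite graph $H(\varphi):=K_{n_1,n_2,\dots,n_\chi}$ on the same vertex set, where $n_i=\theta_G(c_i)$.

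Next I would count the edges of $H(\varphi)$ by grouping them according to which pair of colour classes they join: between $V_i$ and $V_j$ there are exactly $\theta_G(c_i)\theta_G(c_j)$ edges, so
\begin{equation*}
\varepsilon(H(\varphi))=\sum_{1\le i<j\le \chi}\theta_G(c_i)\theta_G(c_j),
\end{equation*}
which is exactly the sum appearing in the statement. Subtracting $\varepsilon(G)$ gives the number of edges of $H(\varphi)$ that are \emph{not} already in $G$; call this set $F=E(H(\varphi))\setminus E(G)$.

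Finally I would argue that adding all edges of $F$ to $G$ produces no bad edge with respect to $\varphi$: each edge of $F$ joins two vertices in different classes $V_i,V_j$ and so its endpoints receive distinct colours under $\varphi$. Hence $G\cup F$ is properly coloured by the \emph{same} chromatic colouring $\varphi$, so $|F|$ is achievable as the number of chromatic completion edges for this particular chromatic colouring of $G$. Since $\zeta(G)$ is defined as the maximum over \emph{all} chromatic colourings of the number of chromatic completion edges, we get $\varepsilon(H(\varphi))-\varepsilon(G)=|F|\le \zeta(G)$, which is the required inequality.

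There is no real obstacle here; the only thing one should be careful about is clarifying that $H(\varphi)$ is a \emph{pseudo} completion graph rather than $G_\zeta$ itself, since a different chromatic colouring $\varphi'$ (with different class sizes) may yield a larger complete multipartite graph and a larger collection of addable edges; the lemma only asserts a lower bound for each choice of $\varphi$, not equality.
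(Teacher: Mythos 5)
Your proposal is correct and follows the same route as the paper's (much terser) proof: identify $H(\varphi)$ with the complete multipartite graph on the colour classes, count its edges as the sum of pairwise class-size products, and observe that the edges of $H(\varphi)$ not in $G$ can all be added without creating a bad edge, giving the lower bound on $\zeta(G)$. You have simply written out in full the details the paper leaves implicit, including the useful closing remark that the lemma gives only a lower bound for each fixed $\varphi$.
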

\begin{proof}
For any chromatic colouring $\varphi:V(G)\mapsto \mathcal{C},$ the graph, $H(\varphi) = K_{\theta_G(c_1),\theta_G(c_2),\dots,\theta_G(c_\chi)}$ is a corresponding pseudo completion graph. Therefore the result as stated.
\end{proof}
Now we are ready for a main result in the form of a corollary which is a direct consequence of Lemma~\ref{lem2.2}
\begin{corollary}
\label{col2.3}
Let  $G$ be a graph. Then 
\begin{eqnarray*}
\zeta(G) & =& max(\varepsilon(H(\varphi)) -\varepsilon(G) \text{ over all} \ \varphi:V(G)\mapsto \mathcal{C}.
\end{eqnarray*}
\end{corollary}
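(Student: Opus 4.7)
The plan is to establish the identity by proving both inequalities separately, leveraging Lemma~\ref{lem2.2} for one direction and the definition of $\zeta(G)$ for the other.

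For the inequality $\max_\varphi(\varepsilon(H(\varphi))-\varepsilon(G)) \leq \zeta(G)$, I would simply invoke Lemma~\ref{lem2.2}. That lemma asserts precisely that for each chromatic colouring $\varphi:V(G)\mapsto\mathcal{C}$, the quantity $\varepsilon(H(\varphi))-\varepsilon(G)$ is bounded above by $\zeta(G)$. Taking the maximum of the left-hand side over all chromatic colourings $\varphi$ yields the desired inequality.

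For the reverse inequality $\zeta(G) \leq \max_\varphi(\varepsilon(H(\varphi))-\varepsilon(G))$, I would argue directly from the definition of the chromatic completion number. By definition there exists some chromatic completion graph $G_\zeta$ obtained from $G$ by adding $\zeta(G)$ chromatic completion edges relative to some chromatic colouring $\varphi^\ast$. By construction, $\varphi^\ast$ remains a proper colouring of $G_\zeta$, so every edge of $G_\zeta$ joins two vertices lying in distinct colour classes of $\varphi^\ast$. The pseudo completion graph $H(\varphi^\ast)=K_{\theta_G(c_1),\theta_G(c_2),\dots,\theta_G(c_\chi)}$ is by construction the complete multipartite graph whose parts are exactly those colour classes, so it contains every edge joining two distinctly-coloured vertices. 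Hence $G_\zeta \subseteq H(\varphi^\ast)$, which gives $\varepsilon(G_\zeta)\leq \varepsilon(H(\varphi^\ast))$ and therefore
\[
\zeta(G) = \varepsilon(G_\zeta)-\varepsilon(G) \leq \varepsilon(H(\varphi^\ast))-\varepsilon(G) \leq \max_{\varphi}\bigl(\varepsilon(H(\varphi))-\varepsilon(G)\bigr).
\]
Combining both inequalities yields the claimed equality.

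The argument is essentially a definition-chasing exercise with no real obstacle; the one conceptual point that must be handled carefully is the observation that $H(\varphi)$ is the edge-maximum graph on $V(G)$ for which $\varphi$ is still a proper colouring. Once this is noted, both directions follow immediately, and the corollary is indeed a direct consequence of Lemma~\ref{lem2.2} as advertised in the paper.
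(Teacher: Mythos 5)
Your proof is correct and follows the route the paper intends: the paper states this corollary without any written proof, merely calling it a direct consequence of Lemma~\ref{lem2.2}, and your lower-bound direction $\max_\varphi(\varepsilon(H(\varphi))-\varepsilon(G))\leq\zeta(G)$ is exactly that lemma. Your upper-bound direction --- observing that a chromatic completion graph built from an optimal colouring $\varphi^\ast$ adds no bad edge and hence is a subgraph of the complete multipartite graph $H(\varphi^\ast)$ on the colour classes --- is the half the paper leaves entirely implicit, and you supply it correctly.
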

\begin{theorem}
\label{thm2.4}
Let  $G$ be a graph.  Then $\zeta(G)\leq \varepsilon(\overline{G})$, and equality holds if and only if $G$ is complete.
\end{theorem}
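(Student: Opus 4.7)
The plan is to treat the two claims separately: the inequality, then the equality characterization.

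For the inequality, I would argue that every chromatic completion edge is, by definition, an edge added to $G$ so that no bad edge is introduced under some chromatic colouring $\varphi$. Such an edge joins two vertices of different colours in $\varphi$, so in particular it is not already an edge of $G$. Hence the set of chromatic completion edges realizing $\zeta(G)$ is a subset of the non-edges of $G$. The number of non-edges of $G$ is exactly $\varepsilon(\overline{G}) = \binom{n}{2} - \varepsilon(G)$, so $\zeta(G) \leq \varepsilon(\overline{G})$. This is the easy direction and essentially just unpacks the definition.

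For the equality, the ``if'' direction is immediate: if $G = K_n$, then $\varepsilon(\overline{G}) = 0$ and $\zeta(K_n) = 0$ (as noted in the text just before Theorem~2.1), so both sides vanish. For the ``only if'' direction, assume $\zeta(G) = \varepsilon(\overline{G})$. Then the chromatic completion graph satisfies
\begin{equation*}
\varepsilon(G_\zeta) = \varepsilon(G) + \zeta(G) = \varepsilon(G) + \varepsilon(\overline{G}) = \binom{n}{2},
\end{equation*}
so $G_\zeta \cong K_n$. By Theorem~\ref{thm2.1} (in its contrapositive form, as emphasized in the sentence after its proof), $G_\zeta$ being complete forces $G$ itself to be complete.

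I expect no serious obstacle here. The only subtle point is to make sure the upper bound argument really does identify chromatic completion edges with non-edges of $G$: the edges in $G_\zeta \setminus G$ are added to a \emph{fixed} chromatic colouring, but since no such added edge can coincide with an existing edge of $G$ (completion edges are new edges), the containment in the non-edge set of $G$ is clean. After that, the equality case is a direct counting argument combined with an invocation of Theorem~\ref{thm2.1}, which does all the structural work.
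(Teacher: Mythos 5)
Your proposal is correct, and for the inequality it coincides with the paper's own one-line argument: a chromatic completion edge is by definition not in $E(G)$, hence lies in $E(\overline{G})$, giving $\zeta(G)\leq\varepsilon(\overline{G})$. Where you genuinely diverge is on the equality characterization: the paper's proof environment stops after the inequality and never actually argues the ``if and only if'' clause of the statement (the remark following the theorem instead records a different, colouring-based criterion for equality, namely that $c(u)\neq c(v)$ for every non-adjacent pair $u,v$). You supply the missing half cleanly: the ``if'' direction is the trivial observation that both sides vanish for $K_n$, and for ``only if'' you note that $\zeta(G)=\varepsilon(\overline{G})$ forces $\varepsilon(G_\zeta)=\varepsilon(G)+\zeta(G)=\binom{n}{2}$, so $G_\zeta\cong K_n$, and then Theorem~\ref{thm2.1} (in the form ``$G_\zeta$ complete implies $G$ complete'') finishes the argument. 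This counting-plus-Theorem~\ref{thm2.1} route is a genuine improvement on what the paper writes down, since it turns the structural content of Theorem~\ref{thm2.1} into the engine for the equality case rather than leaving that case unproved; the paper's remark, by contrast, buys a pointwise criterion on colourings that is useful later but does not by itself establish that equality forces completeness. No gaps.
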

\begin{proof}
Since a chromatic completion edge $e\notin E(G)$ it follows $e\in E(\overline{G})$ hence, $\zeta(G)\leq \varepsilon(\overline{G}).$
\end{proof}
An immediate consequence of Theorem 2.4 read with the definition of chromatic completion is that equality holds for a graph $G$ if and only if, for all pairs of distinct vertices, $u$, $v$ for which the edge, $uv \notin E(G)$ we have, $c(u)\neq c(v)$.

For a positive integer $n \geq 2$ and $2\leq \ell \leq n$ let integers, \\$1\leq a_1,a_2,a_3,\dots,a_{\ell-r}, a'_1,a'_2,a'_3,\dots,a'_r \leq n-1$ be such that\\ $n=\sum\limits_{i=1}^{\ell-r}a_i + \sum\limits_{j=1}^{r}a'_j.$ Then $(a_1,a_2,a_3,\dots,a_{\ell-r}, a'_1,a'_2,a'_3,\dots,a'_r)$ is called a $\ell$-partition of $n$ and $\sum\limits_{i=1}^{\ell-r-1}\prod\limits_{k=i+1}^{\ell-r}a_ia_k + \sum\limits_{i=1}^{\ell-r}\prod\limits_{j=1}^{r}a_ia'_j + \sum\limits_{j=1}^{r-1}\prod\limits_{k=j+1}^{r}a'_ja'_k$ is called the \emph{sum of permutated term products} of the $\ell$-partition of $n.$

To illustrate the concepts consider $n=2.$ Since, $(1,1)$ is the only $2$-partition of 2, it follows that $1\times 1=1$ is the only sum of permutated term product, (a single product for $n=2$). For $n=5$ and by the commutative law there are two distinct possible $3$-partitions namely, $(1,1,3)$ or $(1,2,2).$ Hence, the two distinct sum of permutated term products are equal to 7 and 8.  For $n=8$ and by the commutative law there are four distinct possible $3$-partitions namely, $(1,2,5),$ $(1,3,4),$ $(2,3,3)$ or $(2,2,4),$ with corresponding sum of permutated term products equal to 17, 19, 21 and 20, respectively.
\begin{definition}
\label{def2.1}
{\rm For two positive integers $2\leq \ell \leq n$ the division, $\frac{n}{\ell} = \lfloor\frac{n}{\ell}\rfloor + r,$ with $r$ some positive integer and $\ell> r\geq 0$. Hence, $n= \underbrace{\lfloor\frac{n}{\ell}\rfloor+\lfloor\frac{n}{\ell}\rfloor+\cdots+\lfloor\frac{n}{\ell}\rfloor}_{(\ell-r)-terms} +\underbrace{\lceil\frac{n}{\ell}\rceil +\lceil\frac{n}{\ell}\rceil+\cdots+\lceil\frac{n}{\ell}\rceil}_{(r\geq 0)-terms}.$ This specific $\ell$-partition, $(\underbrace{\lfloor\frac{n}{\ell}\rfloor,\lfloor\frac{n}{\ell}\rfloor,\dots,\lfloor\frac{n}{\ell}\rfloor}_{(\ell-r)-terms},\underbrace{\lceil\frac{n}{\ell}\rceil,\lceil\frac{n}{\ell}\rceil,\dots,\lceil\frac{n}{\ell}\rceil}_{(r\geq 0)-terms})$ is called a \emph{completion $\ell$-partition} of $n.$}
\end{definition} 
The next theorem is a number theoretical result which finds application in the study of chromatic completion of graphs. To ease the formulation of the next result let, $t_i=\lfloor\frac{n}{\ell}\rfloor,$ $i=1,2,3,\dots,(\ell-r)$ and $t'_j=\lceil\frac{n}{\ell}\rceil,$ $j=1,2,3,\dots,r.$ Call, $ \mathcal{L}=\sum\limits_{i=1}^{\ell-r-1}\prod\limits_{k=i+1}^{\ell-r}t_it_k + \sum\limits_{i=1}^{\ell-r}\prod\limits_{j=1}^{r}t_it'_j + \sum\limits_{j=1}^{r-1}\prod\limits_{k=j+1}^{r}t'_jt'_k,$ the \emph{$\ell$-completion sum-product} of $n.$
\begin{theorem}${(Lucky's~Theorem)}$\footnote{Dedicated to late Lucky Mahlalela who was a disabled, freelance traffic pointsman in the City of Tshwane. Sadly he was brutally murdered.}
For a positive integer $n \geq 2$ and $2\leq p \leq n$ let integers, $1\leq a_1,a_2,a_3,\dots,a_{p-r}, a'_1,a'_2,a'_3,\dots,a'_r \leq n-1$ be such that $n=\sum\limits_{i=1}^{p-r}a_i + \sum\limits_{j=1}^{r}a'_j$ then, the $\ell$-completion sum-product $\mathcal{L} = max\{\sum\limits_{i=1}^{p-r-1}\prod\limits_{k=i+1}^{p-r}a_ia_k + \sum\limits_{i=1}^{p-r}\prod\limits_{j=1}^{r}a_ia'_j + \sum\limits_{j=1}^{r-1}\prod\limits_{k=j+1}^{r}a'_ja'_k\}$ over all possible, $n=\sum\limits_{i=1}^{p-r}a_i + \sum\limits_{j=1}^{r}a'_j.$
\label{thm2.5}
\end{theorem}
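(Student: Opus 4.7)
The plan is to recognize that the expression to be maximized is really just $\sum_{1\le i<j\le p} b_i b_j$, where the $b_i$ are the parts of a $p$-partition of $n$ (merging the $a_i$'s and $a'_j$'s into a single indexed list $b_1,\dots,b_p$), since the three sums in the statement together record the product $b_i b_j$ for every unordered pair $\{i,j\}$ exactly once. Then I would invoke the classical identity
\[
\sum_{1\le i<j\le p} b_i b_j \;=\; \tfrac{1}{2}\Bigl(\bigl(\textstyle\sum_{i=1}^{p} b_i\bigr)^{\!2} - \sum_{i=1}^{p} b_i^{\,2}\Bigr) \;=\; \tfrac{1}{2}\Bigl(n^{2} - \sum_{i=1}^{p} b_i^{\,2}\Bigr),
\]
which converts the problem into the equivalent task of \emph{minimizing} $\sum_{i=1}^{p} b_i^{\,2}$ over all decompositions $n = b_1+b_2+\cdots+b_p$ with each $b_i \ge 1$.

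The heart of the argument is a smoothing (exchange) step. Suppose some partition has two parts $b_s, b_t$ with $b_s - b_t \ge 2$. Replacing $(b_s,b_t)$ by $(b_s-1,\, b_t+1)$ preserves the total $n$ and changes $\sum b_i^{\,2}$ by
\[
(b_s-1)^2 + (b_t+1)^2 - b_s^{\,2} - b_t^{\,2} \;=\; -2(b_s - b_t) + 2 \;<\; 0,
\]
since $b_s - b_t \ge 2$. Because $\sum b_i^{\,2}$ is a positive integer that strictly decreases under each such swap, the process must terminate, and any partition minimizing $\sum b_i^{\,2}$ has the property that every two of its parts differ by at most $1$.

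It then remains to identify the balanced configuration. Writing $n = p\lfloor n/p\rfloor + r$ with $0\le r < p$, the only multisets of $p$ positive integers summing to $n$ in which all parts differ by at most $1$ consist of exactly $r$ copies of $\lceil n/p\rceil$ and $p-r$ copies of $\lfloor n/p\rfloor$; this is precisely the completion $p$-partition of Definition~\ref{def2.1}, whose sum of permutated term products is $\mathcal{L}$. Hence $\mathcal{L}$ realises the maximum of the sum of permutated term products over all admissible $p$-partitions of $n$, which is the content of Lucky's Theorem.

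The main obstacle I anticipate is purely bookkeeping: translating the rather elaborate triple-sum notation of the statement into the clean symmetric form $\sum_{i<j} b_i b_j$, and then, at the end, translating the balanced multiset back into the split between the $t_i$-block and the $t'_j$-block used in the definition of $\mathcal{L}$. Once that re-indexing is in place, the sum-of-squares identity together with the one-line exchange inequality does all of the real work, and uniqueness of the balanced shape (up to permutation) pins down $\mathcal{L}$ as the maximum value.
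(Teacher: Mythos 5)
Your proposal is correct, but it takes a genuinely different route from the paper. The paper proves the result by induction on the number of parts $p$: the base case $p=2$ is handled by observing that $x(n-x)$ is maximized near $x=n/2$ and restricting to integers, and the inductive step passes from a balanced $q$-partition to a $(q+1)$-partition by shaving one unit off enough of the $\lceil n/q\rceil$ parts, arguing that since each pair of resulting parts is pairwise optimal the whole sum of pairwise products is maximal. You instead merge the two blocks of parts into a single list $b_1,\dots,b_p$, use the identity $\sum_{i<j}b_ib_j=\tfrac12\bigl(n^2-\sum_i b_i^2\bigr)$ to convert the maximization into minimizing $\sum_i b_i^2$, and then run a smoothing exchange $(b_s,b_t)\mapsto(b_s-1,b_t+1)$ that strictly decreases the sum of squares whenever two parts differ by at least $2$. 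Your argument is the more robust of the two: the paper's inductive step leans on the unproved assertion that pairwise optimality of all pairs forces global optimality of the sum (and on the redistribution always landing exactly on the completion $(q+1)$-partition), whereas your exchange step gives a one-line, fully verifiable inequality, and termination plus the characterization of multisets with all parts within $1$ of each other pins down the extremal partition uniquely up to permutation. The only bookkeeping you flag --- translating the paper's triple-sum notation into $\sum_{i<j}b_ib_j$ and back --- is indeed the correct reading of the ``sum of permutated term products,'' as the paper's own numerical examples for $n=5$ and $n=8$ confirm, so no gap remains.
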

\begin{proof}
Let $n,p \in \Bbb N$, $2 \leq p \leq n.$ The commutative law is valid for addition and multiplication hence, we assume that, $1 \leq a_1\leq a_2 \leq a_3 \leq\cdots \leq a_{p-r}\leq a'_1\leq a'_2\leq a'_3\leq \cdots \leq a'_r \leq n-1$ and that $n=\sum\limits_{i=1}^{p-r}a_i + \sum\limits_{j=1}^{r}a'_j.$

For $p=2,$ consider $a_1=x,$ $a'_1 =n-x.$ So $a_1\times a'_1=x(n-x)$ for which a maximum of $\frac{n}{2}\times \frac{n}{2}$ is obtain at $x= \frac{n}{2}.$ We restrict values to integer products thus an integer maximum is attained for the ordered pairs, $(\lfloor \frac{n}{2}\rfloor, \lfloor \frac{n}{2}\rfloor)$ or $(\lfloor \frac{n}{2}\rfloor, \lceil \frac{n}{2}\rceil)$ or $(\lceil \frac{n}{2}\rceil, \lceil \frac{n}{2}\rceil).$ Hence, the result, \emph{maximum sum of permutated term products} holds for the completion 2-partition of $n$ if $p=2.$ Assume it holds for $p = q \in \Bbb N.$ Hence, the assumption states that for, $1 \leq a_1,a_2,a_3,\dots,a_{q-r}, a'_1,a'_2,a'_3,\dots,a'_r \leq n-1$ be such that:

$n=\sum\limits_{i=1}^{q-r}a_i + \sum\limits_{j=1}^{r}a'_j$ then, the $q$-completion sum-product \\ $\mathcal{L} = max\{\sum\limits_{i=1}^{q-r-1}\prod\limits_{k=i+1}^{q-r}a_ia_k + \sum\limits_{i=1}^{q-r}\prod\limits_{j=1}^{r}a_ia'_j + \sum\limits_{j=1}^{r-1}\prod\limits_{k=j+1}^{r}a'_ja'_k\}$ over all possible, $n=\sum\limits_{i=1}^{q-r}a_i + \sum\limits_{j=1}^{r}a'_j.$ Put differently, the aforesaid means that the \emph{sum of permutated term products} is a maximum over that particular $q$-partition. Hence, $(a_{i_{(1 \leq i \leq (q-r))}}, a'_{j_{(1 \leq j \leq r)}})$ corresponds to the completion $q$-partition of $n$ such that, 
\begin{eqnarray*}
n&=& \underbrace{\lfloor\frac{n}{q}\rfloor+\lfloor\frac{n}{q}\rfloor+\cdots+\lfloor\frac{n}{q}\rfloor}_{(q-r)-terms}
+ \underbrace{\lceil\frac{n}{q}\rceil +\lceil\frac{n}{q}\rceil+\cdots+\lceil\frac{n}{q}\rceil}_{(r\geq 0)-terms}.
\end{eqnarray*}
Now consider $p=q+1.$

Case 1: If $r>0$ for $\frac{n}{q},$ determine a $(q+1)^{th}$ sum-term by reducing a sufficient number of the $\lceil \frac{n}{q}\rceil$ sum-terms by $1$ each to obtain terms of the form $\lfloor \frac{n}{q+1}\rfloor$ or $\lceil \frac{n}{q+1}\rceil.$ The aforesaid is always possible. Each pair of terms in the $(q+1)$-partition corresponds to a $2$-completion sum-product of $\lfloor \frac{n}{q+1}\rfloor,$ $\lfloor \frac{n}{q+1}\rfloor$ or $\lfloor \frac{n}{q+1}\rfloor,$ $\lceil \frac{n}{q+1}\rceil,$ or $\lceil \frac{n}{q+1}\rceil,$ $\lceil \frac{n}{q+1}\rceil,$ so it follows that the maximum sum of permutated term products has been obtained between all pairs (follows from the case $p = 2$). It follows that the sum of the maximums yields a maximum over the sum of pairwise products hence, a maximum sum of permutated term products has been obtained. Furthermore, the $(q+1)$-partition obtained corresponds to the terms required for a $(q+1)$-completion sum-product of $n.$ Therefore, the result holds for $p = q+1$ thus it holds for any $2\leq p \leq n$ for which $\frac{n}{q}$ has $r>0.$
 
Case 2: Through similar reasoning the results holds for $r = 0.$

Through immediate induction it follows that the result holds for all $n \in \Bbb N,$ $n \geq 2.$ That concludes the proof.
\end{proof}

Theorem~\ref{thm2.5} leads to a lemma in which each term in a sum-term partition corresponds to a distinct colour class. Hence, if the colours are $c_i,$ $1\leq i \leq \ell$ then, $\theta(c_i) = \lfloor \frac{n}{\ell}\rfloor$ or $\lceil \frac{n}{\ell}\rceil.$
\begin{lemma}
\label{lem2.6}
If a subset of $m$ vertices say, $X \subseteq V(G)$ can be chromatically coloured by $t$ distinct colours and if the graph structure permits such, then allocate colours as follows:
\begin{enumerate}[(a)]
\item  For $t$ vertex subsets each of cardinality $s= \lfloor \frac{m}{t}\rfloor$ allocate a distinct colour followed by:
\item Colour one additional vertex (from the $r\geq 0$ which are uncoloured), each in a distinct colour.
\end{enumerate}
This chromatic colouring permits the maximum number of chromatic completion edges between the vertices in $X$ amongst all possible chromatic colourings of $X$.
\end{lemma}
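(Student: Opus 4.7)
The plan is to reduce the lemma to the purely combinatorial statement of Theorem~\ref{thm2.5} (Lucky's Theorem). First I would fix any chromatic colouring of $X$ using the $t$ available colours and let $\theta(c_i)$ denote the size of the $i$-th colour class restricted to $X$, so that $\sum_{i=1}^{t}\theta(c_i)=m$. Since the only edges that can be present inside $X$ without creating a bad edge are those joining two vertices of different colour, the subgraph induced on $X$ after adding all possible chromatic completion edges is a subgraph of the complete multipartite graph $K_{\theta(c_1),\theta(c_2),\dots,\theta(c_t)}$, in analogy with the pseudo completion graph of Lemma~\ref{lem2.2}. Hence the total number of edges inside $X$ (original plus completion) is at most $\sum_{1\le i<j\le t}\theta(c_i)\theta(c_j)$, and since $|E(\langle X\rangle)|$ is determined by $G$ and does not depend on the colouring, maximising the number of chromatic completion edges placed inside $X$ is equivalent to maximising the quantity $\sum_{i<j}\theta(c_i)\theta(c_j)$ over all proper $t$-colourings of $X$.

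Next I would observe that $(\theta(c_1),\theta(c_2),\dots,\theta(c_t))$ is a $t$-partition of $m$ in the sense introduced before Definition~\ref{def2.1}, and that $\sum_{i<j}\theta(c_i)\theta(c_j)$ is precisely the sum of permutated term products of this partition. Theorem~\ref{thm2.5} then yields immediately that this sum is maximal exactly when $(\theta(c_1),\dots,\theta(c_t))$ is the completion $t$-partition of $m$, i.e., when $t-r$ of the classes have size $\lfloor m/t\rfloor$ and the remaining $r$ classes have size $\lceil m/t\rceil$, where $r=m-t\lfloor m/t\rfloor$.

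Finally I would check that the two-step allocation (a)--(b) in the statement realises exactly the completion $t$-partition: step (a) produces $t$ colour classes each of size $s=\lfloor m/t\rfloor$, accounting for $ts$ vertices; step (b) distributes the remaining $r=m-ts$ vertices one-by-one into distinct colour classes, bumping each of these $r$ classes up to size $\lfloor m/t\rfloor+1=\lceil m/t\rceil$. Thus the colour class sizes realised by (a)--(b) are precisely $(\underbrace{\lfloor m/t\rfloor,\dots,\lfloor m/t\rfloor}_{t-r},\underbrace{\lceil m/t\rceil,\dots,\lceil m/t\rceil}_{r})$, the completion $t$-partition of $m$, and so this colouring attains the upper bound derived above.

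The only real point that needs a word of care is that the optimisation is carried out without regard to which particular edges of $G$ already lie inside $X$; the bound $\sum_{i<j}\theta(c_i)\theta(c_j)-|E(\langle X\rangle)|$ for the number of completion edges is valid for any proper $t$-colouring, and because $|E(\langle X\rangle)|$ is independent of the colouring it does not affect the maximisation. The hypothesis ``if the graph structure permits such'' is exactly what guarantees that the extremal allocation from Lucky's Theorem can actually be implemented as a proper colouring of $\langle X\rangle$, so there is nothing additional to verify on that side and Theorem~\ref{thm2.5} completes the argument.
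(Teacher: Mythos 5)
Your reduction of the lemma to Lucky's Theorem via the colour-class sizes $(\theta(c_1),\dots,\theta(c_t))$, viewed as a $t$-partition of $m$ whose sum of permutated term products bounds the edges of the pseudo completion graph on $X$, is exactly the route the paper intends (it states the lemma as a direct consequence of Theorem~\ref{thm2.5} without writing out a separate proof). Your argument is correct and in fact more careful than the paper's, since you explicitly note that $|E(\langle X\rangle)|$ is colouring-independent and that the hypothesis on the graph structure is what makes the extremal partition realisable as a proper colouring.
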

Lemma~\ref{lem2.6} can be applied to a set of vertices which induce a connected graph by assigning a proper colouring. Lemma~\ref{lem2.6} also has an interesting implication. This is stated as a corollary.
\begin{corollary}
\label{col2.7}
 Let  $G$ be a graph. Then
\begin{enumerate}[(i)]
\item  a chromatic completion graph $G_\zeta$ is not unique.
\item  a set of chromatic completion edges of maximum cardinality is not unique.
\end{enumerate}
\end{corollary}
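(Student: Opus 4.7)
The plan is to derive Corollary~\ref{col2.7} directly from Lemma~\ref{lem2.6}. That lemma prescribes the \emph{multiset of colour-class sizes} of an optimal chromatic colouring, namely the completion $\chi$-partition $(\lfloor n/\chi\rfloor,\dots,\lceil n/\chi\rceil)$, but it does not prescribe \emph{which} vertices populate each class. My argument is that any graph falling under the standing hypothesis of Section~\ref{s2} (not complete $\ell$-partite) admits at least two chromatic colourings realising that same multiset but inducing two different set partitions of $V(G)$, and that these two partitions force two different chromatic completion edge sets.

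For part (i), the steps are: first, fix a chromatic colouring $\varphi$ of $G$ whose colour-class sizes match the conclusion of Lemma~\ref{lem2.6}, so that the $G_\zeta$ built from $\varphi$ realises $\zeta(G)$; second, construct a second colouring $\varphi'$ with the same multiset of class sizes in which at least one vertex has been moved to a different class — for instance by swapping two vertices across distinct classes whenever properness is preserved, or by pulling $\varphi$ back through a non-colour-preserving automorphism of $G$; third, observe that some non-edge $uv$ of $G$ that is bichromatic under $\varphi$ becomes monochromatic under $\varphi'$ (or vice versa), which by Lemma~\ref{lem2.2} forces the two induced chromatic completion graphs to differ. A small concrete witness, such as $C_7$ with its optimal $(2,2,3)$ partition, suffices to exhibit the phenomenon, since a rotation of an optimal $3$-colouring of $C_7$ produces another optimal colouring whose bichromatic non-edges differ from those of $\varphi$.

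Part (ii) is then immediate: the same pair $\varphi,\varphi'$ produces two different sets of maximum-cardinality chromatic completion edges, because each such set is precisely the collection of bichromatic non-edges under its respective colouring, and these collections have been shown to differ.

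The main obstacle is to keep the argument from being confused with the trivial symmetry of relabelling colour names, which leaves the partition of $V(G)$ unchanged and hence the chromatic completion edges unchanged. The genuine non-uniqueness comes instead from the combinatorial freedom in \emph{which} vertex subsets of the prescribed sizes play the role of colour classes — a freedom that Lemma~\ref{lem2.6} guarantees whenever the graph structure admits more than one such choice, as is typical once $G$ is not complete $\ell$-partite.
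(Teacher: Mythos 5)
The paper offers no argument for this corollary at all --- it is merely announced as an ``implication'' of Lemma~\ref{lem2.6} --- so there is nothing to compare your attempt against; but your attempt makes visible exactly where any proof must break. The load-bearing step is your second one: producing a second optimal colouring $\varphi'$ with the same class sizes but a genuinely different vertex partition. You offer two devices (swapping two vertices across classes ``whenever properness is preserved'', or pulling back through a non-colour-preserving automorphism), but you never show either is available, and in general neither is: the statement as written is false. Take $G=P_4$ with vertices $a,b,c,d$ and edges $ab,bc,cd$. It is not complete multipartite, so it meets the paper's standing hypothesis, yet a connected bipartite graph has a unique proper $2$-colouring up to renaming colours, so the partition $\{a,c\},\{b,d\}$ is forced; the only chromatic completion edge is $ad$, and both $G_\zeta\cong K_{2,2}$ and the completion edge set $\{ad\}$ are unique, contradicting (i) and (ii). The same failure occurs for every connected non-complete bipartite graph and, more generally, for every uniquely $\chi$-colourable graph that is not complete multipartite (such graphs exist for every $\chi\geq 3$, e.g.\ uniquely $3$-colourable triangle-free graphs). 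So no amount of care at your step two can rescue a universal proof; the obstacle you flag at the end (distinguishing genuine repartition from colour relabelling) is real, but the deeper problem is that a genuine repartition need not exist.

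What your argument does establish is the existential reading of the corollary: there exist graphs for which $G_\zeta$ and the maximum set of completion edges are not unique. Your $C_7$ witness is sound: the optimal class sizes are $(3,2,2)$, a rotation of one optimal colouring is another optimal colouring inducing a different partition of $V(C_7)$, the sets of bichromatic non-edges differ, and (ii) then gives (i) exactly as you say. If you want a correct statement, either restrict the corollary to graphs admitting at least two optimal colour partitions (which then needs its own characterisation --- the paper itself gestures at this difficulty in Section~\ref{s3}), or state it existentially and let the single example $C_7$ (or $C_5$) be the entire proof.
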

Another interesting implication of Lemma~\ref{lem2.6} is that for any $n\in \Bbb N$ the complete $\ell$-partite graph of order $n$ given by $$K_{(\underbrace{\lfloor\frac{n}{\ell}\rfloor,\lfloor\frac{n}{\ell}\rfloor,\cdots,\lfloor\frac{n}{\ell}\rfloor}_{(\ell-r)-terms}, \underbrace{\lceil\frac{n}{\ell}\rceil,\lceil\frac{n}{\ell}\rceil,\cdots+\lceil\frac{n}{\ell}\rceil}_{(r\geq 0)-terms})}, \ \frac{n}{\ell} = \lfloor\frac{n}{\ell}\rfloor + r$$ with $r$ some positive integer and $r\geq 0,$ has maximum number of edges amongst all complete $\ell$-partite graph of order $n.$ Furthermore, it is a direct consequence from the proof of Theorem~\ref{thm2.5} that for those graphs which permit the colour allocation prescribed by Lemma~\ref{lem2.6}, the maximum number of chromatic completion edges between the vertices in $X$ amongst all possible chromatic colourings of $X$ are unique hence, well-defined.

It is important to note that not all graphs permit the colour allocation prescribed by Lemma~\ref{lem2.6}. For such graphs an \emph{optimal near-completion $\ell$-partition} is always possible. The optimal near-completion $\ell$-partition follows from the fact that for $n+1,$ even, we have that $1\times n<2\times (n-1)< 3\times (n-2)<\cdots < (\frac{n+1}{2})^2.$ Similarly for $n+1,$ odd, we have that, $1\times n<2\times (n-1)< 3\times (n-2)<\cdots < \lfloor \frac{n+1}{2}\rfloor \times \lceil \frac{n+1}{2}\rceil.$ This then yields the unique chromatic completion number. See discussion following Proposition~\ref{prop2.8}.
\subsection{Chromatic completion number of certain graphs}
\label{sub2.1}
The result for acyclic graphs and even cyclic graphs (graphs containing only even cycles), $G$ of order $n$ is straight forward i.e. $\zeta(G) = \theta(c_1)\theta(c_2)-\varepsilon(G).$ Example, for an even cycle graph $C_n$ it follows that, $\zeta(C_n)=\frac{n}{2}\times \frac{n}{2}-n=\frac{n(n-4)}{4}.$ This section will henceforth, unless stated otherwise, consider graphs which contains at least one odd cycle, thus graphs for which $\chi(G)\geq 3.$ 

Let the vertices of a cycle graph $C_n$ be labeled $v_i$, $1\leq i \leq n.$ 
A sunlet graph $Sl_n,$ $n\geq 3$ is obtained from a cycle graph $C_n$ by attaching a pendant vertex $u_i$ to each cycle vertex $v_i,$ $1\leq i \leq n.$ A graph $W_{1+n} = C_n+K_1,$ $n\geq 3$ is called a wheel graph. The edges and vertices of $C_n$ are respectively, called rim edges and rim vertices. The vertex corresponding to $K_1$ is called the central vertex say, $v_0$  and the edges incident with the central vertex are called spokes.
 
Since a complete graph $K_n$ is obtain from a cycle graph $C_n$ by adding all possible chords, a complete graph is a cycle derivative graph as well. Recall that a sun graph $S_n,$ $n\geq 2$ is obtained from the complete graph $K_n$ by adding vertices $u_i$ and the edges $u_iv_i,$ $u_iv_{i+1},$ $1\leq i\leq n$ and where modular arithmetic at edge $v_nv_1$ has known meaning. Note that $S_2\cong K_3 \cong C_3$ and is therefore treated as $C_3.$
\begin{proposition}
\label{prop2.8}
\begin{enumerate}[(i)]
\item  Let $C_n$ be  an odd cycle graph  and  $n\geq 3.$ Then
\begin{eqnarray*} 
\zeta(C_n) &=&
\begin{cases}
n(\frac{n}{3}-1), &\text {if $n=0~(mod~3),$}\\
(n-2)\frac{n-5}{3} + \lceil \frac{n-2}{2}\rceil +1, &\text {if $n=2~(mod~3),$}\\
 (n-1)\frac{n-4}{3} + \lceil \frac{2}{3}(n-5)\rceil +1, & \text {if $n=1~(mod~3).$}
\end{cases}
\end{eqnarray*} 
\item  Let $Sl_n$ be a sunlet graph and  $n\geq 3.$ Then $\zeta(Sl_n)= 3\zeta(C_n)+n.$
\item  Let $W_{1,n}$  be a wheel graph  and $n\geq 3.$ Then
\begin{eqnarray*}  
\zeta(W_{1,n}) &=&
\begin{cases}
\frac{n^2}{4}, &\text {if $n$ is even,}\\
\zeta(C_n), & \text {if $n$ is odd.}
\end{cases}
\end{eqnarray*}
\item Let $S_n$  be a sun graph and $n\geq 3.$  Then $\zeta(S_n)= \frac{n(3n-4)}{2}.$
\end{enumerate} 
\end{proposition}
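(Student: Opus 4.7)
The uniform strategy for all four parts is to invoke Corollary~\ref{col2.3} and maximise $\sum_{i<j}\theta(c_i)\theta(c_j)-\varepsilon(G)$ over chromatic colourings of $G$. By Lemma~\ref{lem2.6} and its optimal near-completion refinement, the maximum is attained when the colour classes are made as equal in size as possible, subject to the colouring being proper. Each of the four parts therefore reduces to identifying $\chi(G)$, exhibiting an explicit proper colouring realising the completion (or near-completion) $\chi$-partition of $|V(G)|$, and performing the resulting arithmetic.

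For part~(i), $\chi(C_n)=3$ and the three formulae correspond to $n\bmod 3$. When $n\equiv 0\pmod 3$ the balanced partition $(n/3,n/3,n/3)$ is realised by the periodic colouring $c_1c_2c_3c_1c_2c_3\cdots$, with the wrap-around edge $v_nv_1$ being proper because $3\mid n$; this gives $3(n/3)^2-n = n(n/3-1)$ directly. When $n\equiv 1,2\pmod 3$ the periodic pattern fails at the wrap-around, so I would produce a proper colouring by locally modifying one or two vertices near $v_n$ to restore properness. The combinatorial core is then to identify the optimal near-completion 3-partition of $n$ that an odd cycle can realise and to verify that no other proper 3-colouring of $C_n$ yields a larger pairwise product sum. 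For part~(iii), the central vertex $v_0$ is adjacent to every rim vertex, so in every proper colouring it forms a singleton colour class contributing no chromatic completion edge. For $n$ odd we have $\chi(W_{1,n})=4$ and the problem reduces to the optimal 3-colouring of the rim cycle, giving $\zeta(W_{1,n})=\zeta(C_n)$; for $n$ even, $\chi(W_{1,n})=3$, the rim is 2-coloured into classes of size $n/2$, and the arithmetic on the pseudo-completion $K_{n/2,n/2,1}$ minus the rim and spoke edges produces the claimed value.

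For part~(ii), the plan is to start with an optimal 3-colouring of the underlying $C_n$ and assign to each pendant $u_i$ whichever of the two colours not used at $v_i$ extends the distribution most favourably; writing the new class counts $\theta'(c_i)$ in terms of the cycle counts $\theta(c_i)$ and the pendant choices, the identity $\sum_{i<j}\theta'(c_i)\theta'(c_j)-\varepsilon(Sl_n)=3\zeta(C_n)+n$ is then obtained by direct expansion. For part~(iv), I would colour the inner $K_n$ via $v_i\mapsto c_i$ and then set $u_i\mapsto c_{i+2}$ (indices mod~$n$); this avoids the forbidden colours $c_i,c_{i+1}$, is globally proper, and produces the uniform distribution $(2,2,\dots,2)$. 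Evaluating the pseudo-completion $K_{2,2,\dots,2}$ and subtracting $\varepsilon(S_n)=\binom{n}{2}+2n$ gives the stated $\frac{n(3n-4)}{2}$.

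The hard part throughout will be showing that the proposed colouring is truly optimal and not merely admissible. For odd cycles (i) this is sharpest: the odd-cycle constraint and the divisibility of $n$ by~3 interact, so one must verify case by case that the exhibited near-completion 3-partition dominates every other realisable partition. For sunlet~(ii) and sun~(iv), the pendant or outer-vertex colour flexibility makes it easy to extend inner colourings, but matching the exact closed-form expressions requires careful bookkeeping of the pairwise products; here the content of Theorem~\ref{thm2.5} supplies the needed maximisation principle.
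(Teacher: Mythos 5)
Your high-level strategy --- reduce via Corollary~\ref{col2.3} to maximising $\sum_{i<j}\theta(c_i)\theta(c_j)-\varepsilon(G)$, realise the (near-)completion partition by an explicit proper colouring, and compute --- is the same as the paper's. The one structural difference is in part~(i) for $n\equiv 1,2\pmod 3$: the paper does not re-optimise over near-completion $3$-partitions but instead grows $C_{n+2}$ (resp.\ $C_n$) out of a balanced-coloured $C_n$ with $3\mid n$ (resp.\ out of $C_{n-1}$), counting the new chord plus the completion edges incident with the one or two added vertices. Your route is viable in principle --- the partition $(\lfloor n/3\rfloor,\dots,\lceil n/3\rceil)$ is realisable by a proper colouring of an odd cycle and Theorem~\ref{thm2.5} gives its optimality among all $3$-partitions --- but you leave exactly that realisability-plus-optimality verification, which is the entire content of part~(i), as a remark that it ``must be verified case by case.''

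The more serious problem is that in parts (ii)--(iv) you assert the final arithmetic ``produces the claimed value'' without performing it, and it does not. For the sunlet with $3\mid n$ the rotated colouring gives three classes of size $2n/3$, so your expansion yields $3(2n/3)^2-2n=\tfrac{4n^2}{3}-2n$, not $3\zeta(C_n)+n=n^2-2n$; already for $Sl_3$ the colouring with classes $\{v_1,u_3\},\{v_2,u_1\},\{v_3,u_2\}$ gives $12-6=6$ completion edges against the claimed $3$. For the even wheel, $\varepsilon(K_{n/2,n/2,1})-2n=\tfrac{n^2}{4}-n$, not $\tfrac{n^2}{4}$ (and indeed $\zeta(W_{1,4})=0$). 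For the sun graph your colouring gives $n$ classes of size $2$, hence $4\binom{n}{2}-\bigl(\binom{n}{2}+2n\bigr)=\tfrac{n(3n-7)}{2}$, not $\tfrac{n(3n-4)}{2}$ (note that the paper's own intermediate count $\tfrac{1}{2}n(n-1)+n(n-3)$ also equals $\tfrac{n(3n-7)}{2}$). So the method you describe, carried out honestly, contradicts the statement in three of the four parts; you would need either to exhibit better colourings or to conclude that the stated formulas require correction. Either way, a proof cannot end with an unchecked claim that the bookkeeping closes.
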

\begin{proof}
\begin{enumerate}[(i)]
\item Let $\Bbb{N}^{odd} = \{n:set~of~odd~positive~integers, n\geq 3\}.$ Let $\Bbb{N}_1 =\{n_i \in \Bbb{N}^{odd}: n_i=0~(mod~3)\},$ $\Bbb{N}_2 =\{n_j \in \Bbb{N}^{odd}: n_j=1~(mod~3)\},$ $\Bbb{N}_3 =\{n_k \in \Bbb{N}^{odd}: n_k=2~(mod~3)\}.$ Clearly, $\Bbb{N}^{odd} = \Bbb{N}_1 \cup \Bbb{N}_2 \cup \Bbb{N}_3.$

Part 1: Let $n =3t,$ $t=1,3,5,7,\dots.$ Hence, $n=0~(mod~3)$ and $\chi(C_n)=3.$ For the colour set $\mathcal{C} = \{c_1,c_2,c_3\}$ and without loss of generality and by symmetry consideration, the extremal number of vertices coloured $c_3$ are either $\theta_{C_n}(c_3) =1$ or $\theta_{C_n}(c_3) = \frac{n}{3}.$

Case 1; ($\theta_{C_n}(c_3)=1$): without loss of generality, let $c(v_n)=c_3.$ Note that, $C_n-v_n \cong P_{n-1}$ and $n-1$ is even. From Theorem~\ref{thm2.1} it follows that, $\varepsilon(H(\varphi)) -\varepsilon(C_n) =(n-1) +\frac{(n-1)^2}{4} -(n-2)-2=\frac{n^2-2n-3}{4}.$

Case 2; ($\theta_{C_n}(c_1)=\theta_{C_n}(c_2)=\theta_{C_n}(c_3)=\frac{n}{3}$): now $\varepsilon(H(\varphi)) -\varepsilon(C_n) = n(\frac{n}{3}-1).$

Since, for $n\geq 3$ it follows that, $n^2-6n+9\geq 0 \Rightarrow 4n^2-12n\geq 3n^2-6n-9 \Rightarrow \frac{n^2-3n}{3} \geq \frac{n^2-2n-3}{4}$ we have, $\zeta(C_n) \geq n(\frac{n}{3}-1).$ Through similar reasoning and immediate induction for $1 \leq \theta_{C_n}(c_3) < \frac{n}{3}$ it is concluded that, $\zeta(C_n) = n(\frac{n}{3}-1) =n(t-1).$

Part 2: Consider $C_n,$ $n =3t,$ $t=1,3,5,7,\dots$ as in (i)Part 1 with the extremal repetitive colouring, $c(v_1)=c_1,$ $c(v_2)=c_2,$ $c(v_3)=c_3,\cdots$, $c(v_n)=c_3.$ Now add vertex $v_{n+1},$ $v_{n+2}$ to obtain $C_{n+2}$ and note that the edge $v_nv_1$ is now a chord which represents a count of $+1.$  The additional vertices can only be coloured by the ordered pairs, $(c(v_{n+1}),c(v_{n+2})) = (c_1,c_2)$ or $(c_1,c_3)$ or $(c_2,c_3).$ The number of chromatic completion edges that can be added with an end vertex $v_{n+1}$ or $v_{n+2}$ is exactly $\lceil \frac{n}{2}\rceil.$ Hence, from (i)Part 1, $\zeta(C_{n+2}) = n(\frac{n}{3}-1) + \lceil \frac{n}{2}\rceil +1.$ Finally, standardising to the conventional notation gives the result for $n=2~(mod~3)$ i.e. $\zeta(C_n) = (n-2)(\frac{n-2}{3}-1) + \lceil \frac{n-2}{2}\rceil +1.$

Part 3:  Let $n =3s +1,$ $s=2,4,6,8,\dots.$ Hence, $n=1~(mod~3)$ and $\chi(C_n)=3.$ Similar to (i)Part 1 colour vertices $v_i,$ $1\leq i \leq n-1$ with the extremal repetitive colouring, $c(v_1)=c_1,$ $c(v_2)=c_2,$ $c(v_3)=c_3,\cdots ,$ $c(v_{n-1})=c_3.$ For the cycle graph $C_{n-1}$ it follows from (i)Part 1 that the chromatic completion number is $\zeta(C_{n-1})= (n-1)(\frac{n-1}{3}-1) =(n-1)\frac{n-4}{3}.$ In $C_n$ the edge $v_{n-1}v_1$ is a chord and corresponds to a count of $+1.$ The vertex $v_n$ can only be coloured $c_2.$ The number of chromatic completion edges from vertex $v_n$ is exactly $\lceil \frac{2}{3}(n-5)\rceil.$ Therefore, $\zeta(C_n) = (n-1)\frac{n-4}{3} + \lceil \frac{2}{3}(n-5)\rceil +1.$

\item  Colour the cycle subgraph as in (i). Colour the pendant vertices through say, a clockwise rotation of the cycle colouring of one vertex index that is, $c(v_i)\mapsto c(v_{i+1})$ and modular arithmetic for $v_n,$ $v_1$ has known meaning. Clearly the number of chromatic completion edges permitted amongst the pendant vertices \emph{per se} will be the chromatic completion edges of a cycle graph$C_n$ as well as, $\zeta(C_n)$ chromatic completion edges found for $C_n.$ Therefore, the partial count of chromatic completion edges permitted amongst the pendant vertices is, $\zeta(C_n)+n.$ The cycle graph itself permits $\zeta(C_n)$ chromatic completion edges. Finally, the number of chromatic completion edges permitted between the pendant vertices and the cycle vertices amounts to $\zeta(C_n)$ as well. Hence, $\zeta(Sl_n)=3\zeta(C_n)+n.$

\item Part 1: Because the central vertex is adjacent to all other vertices the chromatic completion edges can only come from the even rim cycle $C_n$. The result follows from Theorem~\ref{thm2.1}.

 Part 2: As in (i)Part 1, it follows that only the odd rim cycle can contribute to chromatic completion edges. Hence, the result follows from (i).
\item  For a complete graph $K_n,$ $n\geq 3$ each $v_i$ can uniquely be coloured $c_i,$ $1\leq i\leq n.$ From Lemma~\ref{lem2.6} it follows that each vertex $u_i$ can be uniquely coloured some $c_j,$ $c_j\neq c(v_i),$ $c_j\neq c(v_{i+1}),$ $1\leq i \leq n$ and where modular arithmetic at edge $v_nv_1$ has known meaning.  Because the set $\{u_i:1\leq i \leq n\}$ is an independent set and each vertex is uniquely coloured amongst the $u_i's$ he chromatic completion permits a complete graphs. This gives the number of chromatic completion edges to be $\frac{1}{2}n(n-1).$ Furthermore, each $u_i$ may be linked to a further $n-3$ vertices of $K_n.$ Hence, the total number of chromatic completion edges is, $\zeta(S_n) =\frac{1}{2}n(n-1) + n(n-3) = \frac{n(3n-4)}{2},$ $n\geq 3.$
\end{enumerate}
\end{proof}
\begin{note}[Optimal near-completion $\ell$-partition]
{\rm Consider the graph $K_1+C_{21}$ and $V(K_1)=\{v\}.$ From Proposition~\ref{prop2.8}(Part 1) and the fact that $N(v)=V(C_{21})$ prohibits the allocation prescribed by Lemma~\ref{lem2.6} The optimal near-completion $\ell$-partition allows for say $\theta(c_1)=\theta(c_2)=\theta(c_3)=7$ and $\theta(c_4)=1$ say, $c(v)=c_4.$ Clearly for a graph $G$ of order $n\geq 2$ and $\chi(G)\geq 2$ all nested graphs of structure $$\underbrace{K_1+(K_1+(K_1+\cdots +(K_1+ G)))}_{k-times}$$ only an optimal near-completion $\ell$-partition can be found.}
\end{note}
\subsection{Chromatic completion number of helm graphs}
\label{sub2.2}
A helm graph $H_{1,n},$ $n\geq 3.$  is obtained from the wheel graph $W_{1,n}$ by adding a pendant vertex $u_i$ to each rim vertex $v_i$, $1\leq i \leq n.$ Helm graphs derived from wheel graphs, $W_{1,n}$ for even $n,$ will be discussed first. Clearly $n\geq 4.$ Let $\Bbb{N}^{even}=\{n: positive~even~integers, n\geq 4\}.$ Let $\Bbb{N}_1=\{n_i\in \Bbb{N}^{even}:n_i=4+6i, i=0,1,2,\dots\},$ $\Bbb{N}_2=\{n_j \in \Bbb{N}^{even}: n_j=6+6j, j=0,1,2,\dots\}$ and $\Bbb{N}_3=\{n_k \in \Bbb{N}^{even}:n_k=8+6k, k=0,1,2,\dots\}.$ Clearly, $\Bbb{N}^{even} = \Bbb{N}_1 \cup \Bbb{N}_2 \cup \Bbb{N}_3.$
\begin{proposition}
\label{prop2.9}
 Let $H_{1,n_i}$  be a helm graph,  $n_i$ even and  $n_i\geq 4.$ Then
\begin{eqnarray*} 
\zeta(H_{1,n_i}) &=&
\begin{cases}
\frac{(4n_i-1)(n_i-1)}{3}, &\text {if $n_i \in \Bbb{N}_1,$}\\
\frac{n_i(12n_i - 19)}{9}, &\text {if $n_i \in \Bbb{N}_2,$}\\
\frac{12n_i^2 - 27n_i -4}{9}, &\text {if $n_i \in \Bbb{N}_3.$}
\end{cases}
\end{eqnarray*} 
\end{proposition}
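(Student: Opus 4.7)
The plan is to apply Corollary~\ref{col2.3} together with Lemma~\ref{lem2.6} and Theorem~\ref{thm2.5} (Lucky's Theorem) to evaluate $\zeta(H_{1,n})=\max_\varphi\varepsilon(H(\varphi))-\varepsilon(H_{1,n})$. I would first record $|V(H_{1,n})|=2n+1$, $\varepsilon(H_{1,n})=3n$ (rim, spokes, pendants), and $\chi(H_{1,n})=3$: since $n$ is even the rim cycle is $2$-chromatic, the hub forces a third colour, and the pendants add no new constraint. Because $v_0$ is adjacent to every rim vertex, in any chromatic $3$-colouring $c(v_0)$ appears only on $v_0$ and possibly on pendants; fix $c(v_0)=c_1$, and then the rim is forced to alternate between $c_2$ and $c_3$ (yielding $n/2$ rim vertices of each colour), while each pendant $u_i$ is free to take $c_1$ or the element of $\{c_2,c_3\}\setminus\{c(v_i)\}$.

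The key parametrisation is $a=|\{i:c(v_i)=c_2,\ c(u_i)=c_1\}|$ and $b=|\{i:c(v_i)=c_3,\ c(u_i)=c_1\}|$, with $0\le a,b\le n/2$, giving
\[
(\theta(c_1),\theta(c_2),\theta(c_3))=(1+a+b,\ n-b,\ n-a).
\]
By Lemma~\ref{lem2.6} and Theorem~\ref{thm2.5} the quantity $\varepsilon(H(\varphi))=\theta(c_1)\theta(c_2)+\theta(c_1)\theta(c_3)+\theta(c_2)\theta(c_3)$ is maximised over all chromatic colourings by the completion $3$-partition of $2n+1$, and I would verify case by case that this partition is realised by an admissible pair $(a,b)$: for $n=4+6i\in\Bbb{N}_1$ take $a=b=1+2i$, producing $(3+4i,3+4i,3+4i)$; for $n=6+6j\in\Bbb{N}_2$ take $a=b=2+2j$, producing $(5+4j,4+4j,4+4j)$; for $n=8+6k\in\Bbb{N}_3$ take $a=b=2+2k$, producing $(5+4k,6+4k,6+4k)$. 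In every case the required values satisfy $a,b\le n/2$, so the bound from Lucky's Theorem is attained and no optimal near-completion $\ell$-partition is needed.

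Finally I would substitute each maximising triple into $\theta(c_1)\theta(c_2)+\theta(c_1)\theta(c_3)+\theta(c_2)\theta(c_3)-3n$ and simplify into the closed form asserted for the corresponding residue class. The main obstacle is the achievability check above: the hub and the evenly alternating rim severely restrict how colour classes can be redistributed, and one must confirm that the pendant freedom is exactly enough to realise the completion $3$-partition in every residue class. Once that is in hand, Corollary~\ref{col2.3} pins the value down and the remainder of the argument is routine algebra specialised to the three cases $n\in\Bbb{N}_1,\Bbb{N}_2,\Bbb{N}_3$.
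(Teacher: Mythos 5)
Your overall strategy is the same as the paper's: compute $\zeta(H_{1,n_i})$ as $\max_\varphi \varepsilon(H(\varphi)) - 3n_i$ via Corollary~\ref{col2.3}, identify the completion $3$-partition of the $2n_i+1$ vertices using Lucky's Theorem and Lemma~\ref{lem2.6}, and check that a chromatic colouring realises it. Your $(a,b)$-parametrisation of the achievable class sizes $(1+a+b,\ n_i-b,\ n_i-a)$ is in fact more careful than the paper, which merely asserts that the required colourings ``are always possible''; your verification that the balanced triples are attainable with $a,b\le n_i/2$ is correct in all three residue classes, and so is your identification of those triples.

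The gap is in your final sentence: the routine algebra does \emph{not} ``simplify into the closed form asserted'' except when $n_i\in\Bbb{N}_1$. Your (correct) maximising triples $(5+4j,4+4j,4+4j)$ for $n_i=6+6j$ and $(5+4k,6+4k,6+4k)$ for $n_i=8+6k$ both yield $\theta(c_1)\theta(c_2)+\theta(c_1)\theta(c_3)+\theta(c_2)\theta(c_3)-3n_i=\frac{12n_i^2-15n_i}{9}=\frac{n_i(4n_i-5)}{3}$, whereas the proposition asserts $\frac{n_i(12n_i-19)}{9}$ and $\frac{12n_i^2-27n_i-4}{9}$; for $n_i=6$ the asserted value is $318/9$ and for $n_i=8$ it is $548/9$, neither an integer, while your triples give $38$ and $72$ respectively. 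The discrepancy originates in the paper itself: in its Part~2 the class sizes $\frac{2n_i}{3},\frac{2n_i}{3},\frac{2(n_i+1)}{3}$ do not sum to $2n_i+1$ (the correct ceiling is $\frac{2n_i+3}{3}$), and in Part~3 the displayed closed form does not match what its own class sizes produce. So your method is sound and actually exposes an error in the statement; to finish you must either correct the asserted formulas for $\Bbb{N}_2$ and $\Bbb{N}_3$ to $\frac{n_i(4n_i-5)}{3}$ or concede that the proposition as printed cannot be derived.
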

\begin{proof}
Part 1: For $n_i \in \Bbb{N}_1$ the colouring $\theta(c_1)=\theta(c_2)=\theta(c_3)=\frac{2n_i+1}{3}$ is always possible. Also, $\varepsilon(H_{1,n_i}) = 3n_i.$ Thus, from Lucky's theorem, Theorem~\ref{thm2.5} read with Corollary~\ref{col2.3} and Lemma~\ref{lem2.6} it follows that, $\zeta(H_{1,n_i}) = 3(\frac{2n_i+1}{3})^2 - 3n_i = \frac{(4n_i-1)(n_i-1)}{3}.$

Part 2:  For $n_i \in \Bbb{N}_2$ the colouring $\theta(c_1)=\theta(c_2)= \lfloor \frac{2n_i+1}{3}\rfloor = \frac{2n_i}{3}$ and $\theta(c_3)=\lceil \frac{2n_i+1}{3}\rceil =\frac{2(n_i+1)}{3}$ is always possible. Also, $\varepsilon(H_{1,n_i}) = 3n_i.$ By similar reasoning as in Part 1, the result of Part 2 follows.

Part 3:  For $n_i \in \Bbb{N}_3$ the colouring $\theta(c_1)=\lfloor \frac{2n_i+1}{3}\rfloor = \frac{2n_i -1}{3}$ and $\theta(c_2)=\theta(c_3)=\lceil \frac{2n_i+1}{3}\rceil =\frac{2(n_i+1)}{3}$ is always possible. Also, $\varepsilon(H_{1,n_i}) = 3n_i.$ By similar reasoning as in Part 1, the result of Part 3 follows.
\end{proof}

The diagrams in Figure~\ref{emb1} serve as illustration of the reasoning used in the proof of Proposition~\ref{prop2.9}

\begin{figure}[htbp]
\begin{center}
\scalebox{0.5}{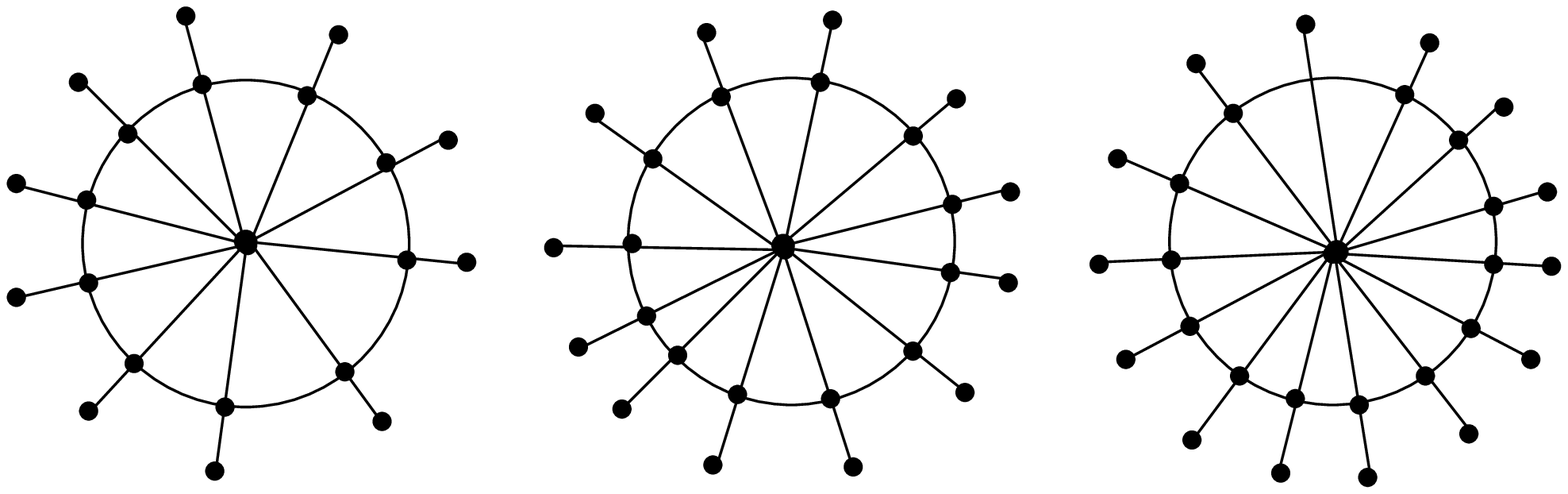}
\caption{}
\label{emb1}
\end{center}
\end{figure}

The next results are for helm graphs $H_{1,n},$ for odd $n.$  Let $\Bbb{N}'_1=\{n_i\in \Bbb{N}^{even}:n_i=3+6i, i=0,1,2,\dots\},$ $\Bbb{N}'_2=\{n_j \in \Bbb{N}^{even}: n_j= 5+6j, j=0,1,2,\dots\}$ and $\Bbb{N}'_3=\{n_k \in \Bbb{N}^{even}:n_k= 7+6k, k=0,1,2,\dots\}.$ Clearly, $\Bbb{N}^{odd} = \Bbb{N}'_1 \cup \Bbb{N}'_2 \cup \Bbb{N}'_3.$
\begin{proposition}
\label{prop2.10}
Let $H_{1,n_i}$  be a helm graph, $n_i$ odd and  $n_i\geq 3.$ Then
\begin{eqnarray*} 
\zeta(H_{1,n_i}) &=&
\begin{cases}
9, &\text {if $n_i =3$},\\
\frac{3n_i(n_i-1)}{2}, &\text {if $n_i \in \Bbb{N}'_1\backslash \{3\}$ or $n_i \in \Bbb{N}'_2$ or $n_i \in \Bbb{N}'_3.$}
\end{cases}
\end{eqnarray*} 
\end{proposition}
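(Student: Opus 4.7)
The approach is to combine Lucky's Theorem (Theorem~\ref{thm2.5}) and Lemma~\ref{lem2.6} to bound $\zeta(H_{1,n_i})$ from above, and then to exhibit an explicit proper $4$-colouring matching that bound. First I would verify that $\chi(H_{1,n_i})=4$ for every odd $n_i\geq 3$: the odd rim cycle $C_{n_i}$ requires $3$ colours, and the central vertex $v_0$ is adjacent to every rim vertex, forcing a fourth colour. Hence $\mathcal{C}=\{c_1,c_2,c_3,c_4\}$ and, without loss of generality, $c(v_0)=c_4$, so the rim uses only $\{c_1,c_2,c_3\}$ while each pendant $u_i$ takes some colour in $\mathcal{C}\setminus\{c(v_i)\}$.

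Since $|V(H_{1,n_i})|=2n_i+1$ and $n_i$ is odd we have $2n_i+1\equiv 3\pmod 4$, so the completion $4$-partition of Definition~\ref{def2.1} is
\[
\Bigl(\tfrac{n_i-1}{2},\,\tfrac{n_i+1}{2},\,\tfrac{n_i+1}{2},\,\tfrac{n_i+1}{2}\Bigr).
\]
Theorem~\ref{thm2.5} then identifies the $4$-completion sum-product as
\[
\mathcal{L}=3\cdot\tfrac{n_i-1}{2}\cdot\tfrac{n_i+1}{2}+3\bigl(\tfrac{n_i+1}{2}\bigr)^2=\tfrac{3n_i(n_i+1)}{2},
\]
and combining this with $\varepsilon(H_{1,n_i})=3n_i$, Corollary~\ref{col2.3} read together with Lemma~\ref{lem2.6} yields the upper bound
\[
\zeta(H_{1,n_i})\leq\tfrac{3n_i(n_i+1)}{2}-3n_i=\tfrac{3n_i(n_i-1)}{2}.
\]

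The main obstacle is showing this bound is attained, i.e.\ producing a proper $4$-colouring of $H_{1,n_i}$ with $\theta(c_4)=(n_i-1)/2$ and $\theta(c_j)=(n_i+1)/2$ for $j\in\{1,2,3\}$. For $n_i=3$ the rim is $K_3$, so $\{c(v_1),c(v_2),c(v_3)\}=\{c_1,c_2,c_3\}$; then $c(u_1)=c_2$, $c(u_2)=c_3$, $c(u_3)=c_1$ directly produces the distribution $(2,2,2,1)$ yielding $\zeta(H_{1,3})=9$. For odd $n_i\geq 5$, I would colour the rim cycle by the pattern $c(v_i)=c_1$ for odd $i<n_i$, $c(v_i)=c_2$ for even $i$, and $c(v_{n_i})=c_3$; this is a proper $3$-colouring with rim counts $\bigl((n_i-1)/2,(n_i-1)/2,1\bigr)$. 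Each of $c_1,c_2,c_3$ is then topped up to the target size $(n_i+1)/2$ by assigning exactly one pendant each to $c_1$ and $c_2$, giving $(n_i-1)/2$ pendants the colour $c_3$, and allocating the remaining $(n_i-3)/2$ pendants to $c_4$.

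The forbidden-colour constraint $c(u_i)\neq c(v_i)$ is straightforward to respect because every pendant has three admissible choices, and a short case analysis by the residue of $n_i\pmod 6$ (i.e.\ by $\mathbb{N}'_1\setminus\{3\},\ \mathbb{N}'_2,\ \mathbb{N}'_3$) only modifies which specific pendant receives which colour and leaves both the upper and lower bounds coinciding in the uniform value $\tfrac{3n_i(n_i-1)}{2}$. The technical heart of the argument is therefore the feasibility verification of the pendant assignment in each of the three residue families; once that is done, equality in the Lucky-type bound delivers the stated formula.
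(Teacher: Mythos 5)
Your proposal is correct and follows essentially the same route as the paper: identify the completion $4$-partition $\bigl(\tfrac{n_i-1}{2},\tfrac{n_i+1}{2},\tfrac{n_i+1}{2},\tfrac{n_i+1}{2}\bigr)$ of $2n_i+1$, apply Lucky's Theorem with Corollary~\ref{col2.3} and Lemma~\ref{lem2.6} to get $\zeta(H_{1,n_i})=\tfrac{3n_i(n_i+1)}{2}-3n_i=\tfrac{3n_i(n_i-1)}{2}$, and check the colouring is realizable. In fact you go slightly further than the paper by exhibiting the explicit rim-plus-pendant colouring (the paper only asserts that the required $\theta$-distribution ``is always possible''), and you correctly observe that the three residue classes need no genuinely separate treatment.
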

\begin{proof}
Part 1: It is easy to verify that $\zeta(H_{1,3})=9.$

Part 2(a): For $n_i \in \Bbb{N}'_1\backslash \{3\}$ the colouring $\theta(c_1)=\theta(c_2)=\theta(c_3)=\lceil \frac{2n_i+1}{4}\rceil = \frac{2(n_i+1)}{4}$ and $\theta(c_4)=\lfloor \frac{2n_i+1}{4}\rfloor =\frac{2(n_i-1)}{4}$ is always possible. Also, $\varepsilon(H_{1,n_i}) = 3n_i.$ Thus, from Lucky's theorem read with Corollary~\ref{col2.3} and Lemma~\ref{lem2.6} it follows that, $\zeta(H_{1,n_i}) = 3(\frac{2(n_i+1)}{4})^2 + 3(\frac{2(n_i+1)\times 2(n_i-1)}{4}) - 3n_i = \frac{3n_i(n_i-1)}{2}.$

Part 2(b): For $n_i \in \Bbb{N}'_2$ the colouring $\theta(c_1)=\theta(c_2)=\theta(c_3)=\lceil \frac{2n_i+1}{4}\rceil = \frac{2(n_i+1)}{4}$ and $\theta(c_4)=\lfloor \frac{2n_i+1}{4}\rfloor =\frac{2(n_i-1)}{4}$ is always possible. Also, $\varepsilon(H_{1,n_i}) = 3n_i.$ This result then follows from Part 2(a) noting, $n_i \in \Bbb{N}'_2.$

Part 2(c): For $n_i \in \Bbb{N}'_3$ the colouring $\theta(c_1)=\theta(c_2)=\theta(c_3)=\lceil \frac{2n_i+1}{4}\rceil = \frac{2(n_i+1)}{4}$ and $\theta(c_4)=\lfloor \frac{2n_i+1}{4}\rfloor =\frac{2(n_i-1)}{4}$ is always possible. Also, $\varepsilon(H_{1,n_i}) = 3n_i.$ This result then follows from Part 2(a) noting, $n_i \in \Bbb{N}'_3.$
\end{proof}

The diagrams in Figure~\ref{emb2} serve as illustration of the reasoning used in the proof of Proposition 2.10.

\begin{figure}[htbp]
\begin{center}
\scalebox{0.5}{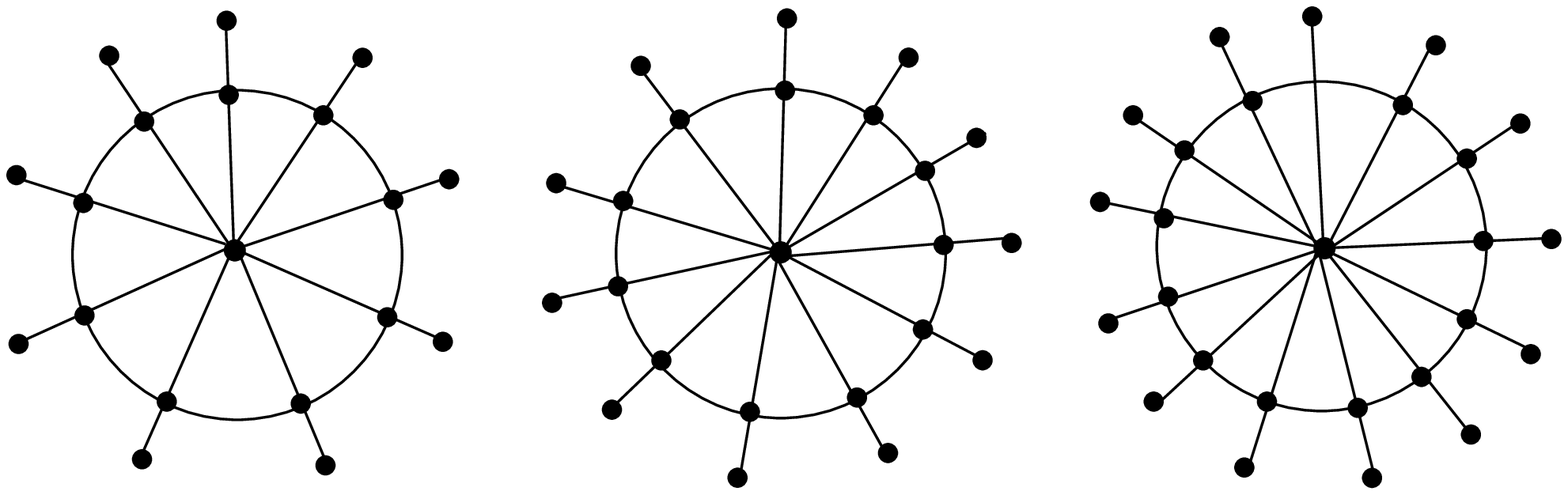}
\caption{}
\label{emb2}
\end{center}
\end{figure}
\section{Conclusion}
\label{s3}
In several of the proofs the technique of graph decomposition permitted by Lemma~\ref{lem2.2} and vertex partitioning permitted by Lemma~\ref{lem2.6} were incorporated. These salient techniques of proof are worthy of further research.

Essentially chromatic completion of a given graph $G$ yields a new graph $G'$ such that both $G,$ $G'$ are of the same order, $\chi(G)=\chi(G'),$  $G\ncong G'$ and $\varepsilon(G')$ is a maximum. For both a chromatic polynomial exists. It is of interest to find a relation between these chromatic polynomials if such relation exists.

Determining the chromatic completion number of a wide range of small graphs is worthy research. Research in respect of all known graph operations remains open. The behavior of chromatic completion for other derivative proper colourings such as Johan colouring (also called $\mathcal{J}$-colouring), co-colouring, Grundy colouring, harmonious colouring, complete colouring, exact colouring, star colouring and others offers a wide scope for further research. Relations between the corresponding derivative chromatic completion numbers, if such exist, are open problems to be investigated. It is suggested that complexity analysis of these new parameters are worthy of further research.

The problem of characterising graphs which permit the colour allocation prescribed by Lemma~\ref{lem2.6} is a challenging open problem. Certainly all graphs $G$ of order $n\geq 4$ with $\chi(G)\geq 2,$ which has a spanning subgraph $H$ which is a star graph, prohibit the prescription of Lemma~\ref{lem2.6}.


\end{document}